\newtheorem{theorem}{Theorem}[section]
\newtheorem{notation}[theorem]{Notation}
\newcommand{\s}{\mathfrak{S}}
\newcommand{\sx}{\mathop{\mathfrak{S}}\limits_{x,y,z}}
\newcommand{\W}{\mathcal{W}}
\newcommand{\T}{\mathcal{T}}
\newcommand{\R}{\mathbb{R}}
\newcommand{\C}{\mathbb{C}}
\newcommand{\N}{\widehat{N}}
\newcommand{\f}{\varphi}
\newcommand{\Id}{\mathrm{Id}}
\newcommand{\M}{(M,\allowbreak\f,\allowbreak\xi,\allowbreak\eta,g)}
\newcommand{\ta}{\theta}
\newcommand{\om}{\omega}
\newcommand{\F}{\mathcal{F}}
\newcommand{\D}{{\rm d}}
\newcommand{\U}{\mathcal{U}}
\newcommand{\n}{\nabla}
\newcommand{\nn}{\nabla^*}
\newcommand{\ie}{i.e. }
\newcommand{\thmref}[1]{The\-o\-rem~\ref{#1}}
\begin{document}

\title{On Canonical-type Connections \\
on Almost Contact Complex Riemannian Manifolds}

\author{Mancho Manev}
\address{Department of Algebra and Geometry, Faculty of Mathematics and
Informatics\\
Paisii Hilendarski University of Plovdiv,
236 Bulgaria Blvd., Plovdiv 4027, Bulgaria
}
\email{mmanev@uni-plovdiv.bg}

\begin{abstract}
We consider a pair of smooth manifolds, which are the counterparts
in the even-dimensional and odd-dimensional cases. They are separately an
almost complex manifold with Norden metric and an almost contact
manifolds with B-metric, respectively. They can be combined as
the so-called almost contact complex Riemannian manifold.
This paper is a survey with additions of
results on differential geometry of canonical-type connections
(i.e. metric connections with torsion satisfying a certain
algebraic identity) on the considered manifolds.
\end{abstract}

\subjclass[2010]{Primary 53C05; Secondary 53C15, 53C50}
\keywords{almost complex manifold, quasi-K\"ahler manifold, almost
contact manifold, Norden metric, B-metric, natural connection,
canonical connection, torsion, conformal transformation}

\maketitle

\section{Introduction}\label{sec:intro}

The geometry of almost Hermitian manifolds $(N,J,h)$ is well
developed. As it is known, P.~Gauduchon  gives in \cite{Gau97} a
unified presentation of a so-called \emph{canonical} class of
(almost) Hermitian connections, considered by
P.~Libermann in \cite{Lib54}. Let us recall, a linear connection $D$
is called \emph{Hermitian} if it preserves the Hermitian metric
$h$ and the almost complex structure $J$, $Dh=DJ=0$. The
\emph{potential} of $D$ (with respect to the Levi-Civita
connection $\n$), denoted by $Q$, is defined by the difference
$D-\n$. The connection $D$ preserves the metric and therefore is
completely determined by its \emph{torsion} $T$.
According to  \cite{Car25,
TV83,Sal89}, the two spaces of all torsions and of all potentials
are isomorphic as $O(n)$ representations and an equivariant
bijection is the following
\begin{gather}
T (x,y,z) = Q(x,y,z) - Q(y,x,z) ,\label{TQ}\\[0pt]
2Q(x,y,z) = T (x,y,z) - T (y,z,x) + T (z,x,y).\label{QT}
\end{gather}
Following E. Cartan \cite{Car25}, there are studied the algebraic types of the torsion
tensor for a metric connection, \ie a linear connection preserving the metric.

On an almost Hermitian manifold, a Hermitian connection
is called \emph{canonical} 
if its torsion $T$ satisfies the following conditions: \cite{Gau97}

1) the component of $T$ satisfying the Bianchi identity and having
the property $T (J \cdot, J \cdot) = T (\cdot, \cdot)$ vanishes;

2) for some real number $t$, it is valid
$(\s T)^+=(1-2t)(\D\Omega)^+(J\cdot,J\cdot,J\cdot)$, where $\s$ denotes the cyclic sum by
three arguments and $(\D\Omega)^+$ is the part of type
$(2,1)+(1,2)$ of the diffe\-ren\-tial  $\D\Omega$ for the K\"ahler
form $\Omega=g(J\cdot,\cdot)$.


According to  \cite{Gau97}, there exists an one-parameter family
$\{\n^t\}_{t\in\R}$ of canonical Hermitian connections $\n^t =
t\n^1 + (1-t)\n^0$, where $\n^0$ and $\n^1$ are the
\emph{Lichnerowicz first and second canonical
connections} \cite{Lih62}, respectively.

An object of our interest is the class of manifolds with
Norden-type metrics.\footnote{Research supported by the project NI13-FMI-002 of the Scientific
Research Fund at the University of Plovdiv}

In comparison, the action of the almost complex structure with
respect to the Hermitian metric (respectively, the Norden metric)
on the tangent spaces of the almost complex manifold is an
isometry (respectively, an anti-isometry). The latter manifolds
are known as \emph{generalized B-manifolds} \cite{GrMeDj} or
\emph{almost complex manifolds with Norden metric} \cite{GaBo} or
\emph{complex Riemannian manifolds} \cite{LeB83}. The Norden
metric is a pseudo-Riemannian metric of neutral signature whereas
the Hermitian metric is Riemannian.

In the odd-dimensional case, the additional direction is spanned
by a vector field $\xi$. Then its dual 1-form $\eta$ determines a
codimension one distribution $H = \ker(\eta)$ endowed with an
almost complex structure $\f$. Then we have an almost contact
structure $(\f,\xi,\eta)$. If the almost complex structure is
equipped with a Hermitian metric then the almost contact
manifold is called \emph{metric}. In the case when the restriction
of the metric on $H$ is a Norden metric
then we deal with an \emph{almost contact manifold with B-metric}
(or an \emph{almost contact complex Riemannian manifold}). Any
B-metric as an odd-dimensional counterpart of a Norden metric is a
pseudo-Riemannian metric of signature $(n+1,n)$.

The goal of the present paper is to survey the research on
canonical-type connections in the case of Norden-type metrics
as well as some additions and generalizations are made. In
Section~\ref{sec:even} we consider the even-dimensional case and
in Section~\ref{sec:odd} --- the odd-dimensional one.

\begin{notation}
\begin{enumerate}
    \item[(a)]
    The notation $\sx$ (or simply $\s$) means
the cyclic sum by the three arguments $x$, $y$, $z$; e.g.,
$\sx F(x,y,z)=F(x,y,z)+F(y,z,x)+F(z,x,y)$;
    \item[(b)]
    For the sake of brevity, we shall use the notation
       $\{A(x,y,z)\}_{[x\leftrightarrow y]}$ for the difference $A(x,y,z)-A(y,x,z)$ and
$\{A(x,y,z)\}_{(x\leftrightarrow y)}$ for the sum
$A(x,y,z)+A(y,x,z)$, where $A$ is an arbitrary tensor of type $(0,3)$;
    \item[(c)] We shall use double subscripts separated by the symbol $\slash$.
    The former and latter subscripts regarding this symbol
correspond to the upper and down signs plus and minus (or, $=$ and $\neq$) in the same
equality, respectively.
For example, the notation %
$\F_{8/9}: F(x,y,z)=F(x,y,\xi)\eta(z)+F(x,z,\xi)\eta(y)$,
$F(x,y,\xi)=\pm F(y,x,\xi)=F(\f x,\f y,\xi)$ means %
$\F_{8}: F(x,y,z)=F(x,y,\xi)\eta(z)+F(x,z,\xi)\eta(y)$,
$F(x,y,\xi)= F(y,x,\xi)=F(\f x,\f y,\xi)$ %
and $\F_{9}: F(x,y,z)=F(x,y,\xi)\eta(z)+F(x,z,\xi)\eta(y)$,
$F(x,y,\xi)=- F(y,x,\xi)=F(\f x,\f y,\xi)$. 
Similarly, $\T_{1/2}: T(\xi,y,z)=T(x,y,\xi)=0,\ T(x,y,z)=-T(\f x,\f y,z)=- T(x,\f y,\f z),
    \ t\nntstile{=}{\neq} 0$ means $\T_{1}: T(\xi,y,z)=T(x,y,\xi)=0,\ T(x,y,z)=-T(\f x,\f y,z)=- T(x,\f y,\f z),
    \ t\neq 0$ and $\T_{2}: T(\xi,y,z)=T(x,y,\xi)=0,\ T(x,y,z)=-T(\f x,\f y,z)=- T(x,\f y,\f z),
    \ t= 0$.
\end{enumerate}
\end{notation}


\section{Almost complex manifolds with Norden metric}\label{sec:even}

Let us consider an almost complex manifold with Norden metric or
an \emph{almost complex Norden manifold} $(M',J,g')$, \ie
\begin{equation}\label{2.1}
J^2x=-x, \qquad g'(Jx,Jy)=-g'(x,y)
\end{equation}
for all differentiable vector fields $x$, $y$ on $M'$. It is
$2n$-dimensional. The \emph{associated metric} $\widetilde{g}'$ of
$g'$ on $M'$ defined by $\widetilde{g}'(x,y)=g'(x,Jy)$ is also a
Norden metric. The signature of both the metrics is necessarily
$(n,n)$.

These manifolds are known as almost complex
manifolds with Norden metric \cite{Nor60,Nor72,GaGrMi85}, almost
complex manifolds with B-metric \cite{GaGrMi87,GaMi87}
or almost complex manifolds with complex Riemannian metric
\cite{LeB83,Manin88,GaIv92,BoFeFrVo99}.
Their structure group is $GL(n,\C)\cap O(n,n)$.

Further in this section, $x$, $y$, $z$, $w$ will stand for
arbitrary differentiable vector fields on $M'$ (or vectors in the
tangent space  of $M'$ at an arbitrary point of $M'$).
Moreover, let $\{e_i\}$ ($i=1,2,\dots,2n$) be an arbitrary
basis of the tangent space of $M'$ at
any point of $M'$ and $g'^{ij}$ be the corresponding components
of the inverse matrix of $g'$.

The fundamental $(0,3)$-tensor $F'$ on $M'$ is defined by
$
F'(x,y,z)=g'\bigl( \left( \n'_x J \right)y,z\bigr)$, 
where $\n'$ is the Levi-Civita connection of $g'$, and $F'$ has the
following properties: \cite{GrMeDj}
\begin{equation}\label{2.3}
F'(x,y,z)=F'(x,z,y)=F'(x,Jy,Jz).
\end{equation}
The corresponding Lee form $\ta'$ is defined by $\ta'(z)=g'^{ij}F'(e_i,e_j,z)$.
The associated trace with respect to the metric $\widetilde g'$ is defined by
$\widetilde \ta'(z)=\widetilde g'^{ij}F'(e_i,e_j,z)$, which implies
the relation $\widetilde \ta'(z)=\ta'(Jz)$ because of
$\widetilde g'^{ij}F'(e_i,e_j,z)=-g'^{ij}F'(e_i,Je_j,z)=g'^{ij}F'(e_i,e_j,Jz)$.

In  \cite{GaBo}, the considered manifolds are classified into three
basic classes $\W_i$ $(i=1,2,3)$ with respect to $F'$.
All classes are determined  as follows:
\begin{equation}\label{Wi}
\begin{array}{rl}
\W_0:\, &F'(x,y,z)=0;\\[0pt]
\W_1:\, &F'(x,y,z)=\frac{1}{2n}\bigl\{g'(x,y)\ta'(z)+g'(x,Jy)\ta'(Jz)\bigr\}
_{(y\leftrightarrow z)};\\[0pt]
\W_2:\, &\sx F'(x,y,Jz)=0,\quad \ta'=0;\\[0pt]
\W_3:\, &\sx F'(x,y,z)=0;\\[0pt]
\W_1\oplus\W_2:\, &\sx F'(x,y,Jz)=0;\\[0pt]
\W_1\oplus\W_3:\, &\sx F'(x,y,z)=\frac{1}{n}\sx\bigl\{g'(x,y)\ta'(z)
+g'(x,Jy)\ta'(Jz)\bigr\};\\[0pt]
\W_2\oplus\W_3:\, &\ta'=0;\\[0pt]
\W_1\oplus\W_2\oplus\W_3:\, &\text{no conditions}.
\end{array}
\end{equation}
The class $\W_0$ of the
\emph{K\"ahler manifolds with Norden metric} belongs to any
other class.


Let $R'$ be the curvature tensor of $\n'$, \ie $R'=[\n'\ , \n'\ ]
- \n'_{[\ ,\ ]}$ and the corresponding $(0,4)$-tensor is
determined by $R'(x,y,z,w)=g'(R'(x,y)z,w)$. The Ricci tensor
$\rho'$ and the scalar curvature $\tau'$  are defined as usual by
$\rho'(y,z)=g'^{ij}R'(e_i,y,z,e_j)$ and
$\tau'=g'^{ij}\rho'(e_i,e_j)$.

A tensor $L$ of type (0,4) having the pro\-per\-ties
$L(x,y,z,w)=-L(y,x,z,w)=-L(x,y,w,z)$, $\mathop{\s} \limits_{x,y,z} L(x,y,z,w)=0 
$ %
is called a \emph{curvature-like tensor}. Moreover, if the
curvature-like tensor $L$ has the property
$L(x,y,Jz,Jw)=-L(x,y,z,w)$,
it is called a \emph{K\"ahler tensor} \cite{GaGrMi87}.

\subsection{The pair of the Nijenhuis tensors}

As it is well known, the Nijenhuis tensor $N'$ of the almost complex structure $J$ is
defined by
\begin{equation}\label{NJ}
N'(x,y) := [J, J](x, y)=\left[Jx,Jy\right]-\left[x,y\right]-J\left[Jx,y\right]-J\left[x,Jy\right].
\end{equation}
Besides it, we define the following symmetric
(1,2)-tensor $\widehat N'$  in analogy to \eqref{NJ} by
\[
\widehat N'(x,y)=\{J ,J\}(x,y)=\{Jx,Jy\}-\{x,y\}-J\{Jx,y\}-J\{x,Jy\},
\]
where the symmetric braces $\{x,y\}=\nabla_xy+\nabla_yx$
are used instead of the antisymmetric brackets $[x,y]=\nabla_xy-\nabla_yx$.
The tensor $\widehat N'$ we also call the  \emph{associated
Nijenhuis tensor} of the almost complex structure.

The pair of the Nijenhuis tensors $N'$ and
$\widehat N'$ plays a fundamental role in the topic
of natural connections (\ie $J$ and $g'$ are parallel with respect to them) on an almost complex Norden manifold.
The torsions and the potentials of these connections
are expressed by these two tensors. By this reason
we characterize the classes of the considered manifolds in terms of $N'$ and
$\widehat N'$.

As it is known from \cite{GaBo}, the class $\W_3$ of
the \emph{quasi-K\"ahler manifolds with Norden
metric} is the only basic class of the considered manifolds with
non-integrable almost complex structure $J$, because $N'$ is
non-zero there. Moreover, this class is determined by the condition
$\widehat N'=0$. The class $\W_1\oplus\W_2$ of the
\emph{(integrable almost) complex manifolds with Norden metric}
is characterized by $N'=0$ and $\widehat N'\neq 0$.
Additionally, the basic classes $\W_1$ and $\W_2$
are distinguish from each other according to the
Lee form $\ta'$: for $\W_1$ the tensor $F'$ is expressed explicitly by the metric and the Lee form, \ie
$\ta'\neq 0$; whereas for $\W_2$ it is valid $\ta'=0$.

The corresponding $(0,3)$-tensors are denoted by the same
letter, $N'(x,y,z)=g'(N'(x,y),z)$, $\widehat N'(x,y,z)=g'(\widehat N'(x,y),z)$.
Both tensors $N'$ and $\widehat N'$ can be expressed in
terms of $F'$ as follows: \cite{GaBo}
\begin{gather}
N'(x,y,z)=F'(x,Jy,z)-F'(y,Jx,z)+F'(Jx,y,z)-F'(Jy,x,z), \label{NF}\\[0pt]%
\widehat N'(x,y,z)=F'(x,Jy,z)+F'(y,Jx,z)+F'(Jx,y,z)+F'(Jy,x,z).\label{NhatF}
\end{gather}
The tensor $\widehat N'$ coincides with the tensor $\widetilde N'$
introduced in \cite{GaBo} by an equivalent equality of
\eqref{NhatF}.

By virtue of \eqref{2.1}, \eqref{2.3}, \eqref{NF} and \eqref{NhatF}, we get the following properties
of  $N'$ and $\widehat N'$:
\begin{gather}
N'(x,y,z)=N'(x,Jy,Jz)=N'(Jx,y,Jz)=-N'(Jx,Jy,z),\\  N'(Jx,y,z)=N'(x,Jy,z)=-N'(x,y,Jz);\label{N-prop}\\
\widehat N'(x,y,z)
=\widehat N'(x,Jy,Jz)=\widehat N'(Jx,y,Jz)
=-\widehat N'(Jx,Jy,z), \\
\widehat N'(Jx,y,z)=\widehat N'(x,Jy,z)=-\widehat N'(x,y,Jz).\label{hatN-prop}
\end{gather}

\begin{theorem}\label{thm:FN}
The fundamental tensor $F'$ of an almost complex Norden manifold $(M',J,g')$
is expressed in terms of the Nijenhuis tensors $N'$ and $\widehat N'$ by the formula
\begin{equation}\label{F=NhatN}
F'(x,y,z)=-\frac14\bigl\{N'(Jx,y,z)+N'(Jx,z,y)
+\widehat N'(Jx,y,z)+\widehat N'(Jx,z,y)\bigr\}.
\end{equation}
\end{theorem}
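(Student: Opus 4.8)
The plan is to work directly from the two expressions \eqref{NF} and \eqref{NhatF} for $N'$ and $\widehat N'$ in terms of $F'$, and to recover $F'$ by forming suitable linear combinations and then exploiting the symmetries \eqref{2.3}. First I would simply add \eqref{NF} and \eqref{NhatF}. The terms $\mp F'(y,Jx,z)$ and $\mp F'(Jy,x,z)$ cancel in pairs, leaving
\[
N'(x,y,z)+\widehat N'(x,y,z)=2F'(x,Jy,z)+2F'(Jx,y,z).
\]
This already isolates the ``$J$ acting in the first or second slot'' part of $F'$; the task is now to convert it into a clean expression for $F'(x,y,z)$ itself.

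Next I would substitute $x\mapsto Jx$ and use $J^2=-\Id$ from \eqref{2.1}, which turns $F'(Jx,y,z)$ into $-F'(x,y,z)$ and yields
\[
N'(Jx,y,z)+\widehat N'(Jx,y,z)=2F'(Jx,Jy,z)-2F'(x,y,z).
\]
To dispose of the unwanted term $F'(Jx,Jy,z)$ I would symmetrize in $y$ and $z$: writing the same identity with $y$ and $z$ interchanged and adding the two copies, the $F'(x,y,z)$ contributions combine (using $F'(x,y,z)=F'(x,z,y)$ from \eqref{2.3}), while the leftover cross terms amount to $F'(Jx,Jy,z)+F'(Jx,Jz,y)$.

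The crux of the argument is to show that this last sum vanishes, and for this I would invoke both symmetries of $F'$ in \eqref{2.3}. Applying $F'(a,b,c)=F'(a,Jb,Jc)$ with $a=Jx$ gives $F'(Jx,Jy,z)=F'(Jx,J(Jy),Jz)=-F'(Jx,y,Jz)$, while $F'(a,b,c)=F'(a,c,b)$ gives $F'(Jx,Jz,y)=F'(Jx,y,Jz)$; the two therefore cancel. After this cancellation the surviving relation reads
\[
N'(Jx,y,z)+\widehat N'(Jx,y,z)+N'(Jx,z,y)+\widehat N'(Jx,z,y)=-4F'(x,y,z),
\]
which is precisely \eqref{F=NhatN} up to the factor $-\tfrac14$. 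I expect the only genuinely delicate point to be tracking which slot $J$ occupies when applying \eqref{2.3}, since that property couples the second and third arguments and must be interleaved with the $y\leftrightarrow z$ symmetry in the correct order; the rest is routine bookkeeping of sign-preserving substitutions.
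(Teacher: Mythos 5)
Your proof is correct and follows essentially the same route as the paper's: both add \eqref{NF} and \eqref{NhatF} to get $N'+\widehat N'=2F'(x,Jy,z)+2F'(Jx,y,z)$, then symmetrize in $y,z$ and use the identity $F'(x,z,Jy)=-F'(x,y,Jz)$ (the paper's \eqref{s2}, which is exactly your cancellation of $F'(Jx,Jy,z)+F'(Jx,Jz,y)$) together with $J^2=-\Id$. The only difference is the order in which you perform the substitution $x\mapsto Jx$ and the $y\leftrightarrow z$ symmetrization, which is immaterial.
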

\begin{proof}
Taking the sum of \eqref{NF} and \eqref{NhatF}, we obtain
\begin{equation}\label{s1}
F'(Jx,y,z)+F'(x,Jy,z)=\frac12\bigl\{N'(x,y,z)+\widehat
N'(x,y,z)\bigr\}.
\end{equation}
The  identities  \eqref{2.1} and \eqref{2.3} imply
\begin{equation}\label{s2}
F'(x,z,Jy)=-F'(x,y,Jz).
\end{equation}
A suitable combination of \eqref{s1} and \eqref{s2} yields
\begin{equation}\label{FJN}
F'(Jx,y,z)=\frac14\bigl\{N'(x,y,z)+N'(x,z,y)+\widehat
N'(x,y,z)+\widehat N'(x,z,y)\bigr\}.
\end{equation}
Applying \eqref{2.1} to \eqref{FJN}, we obtain the stated formula.
\end{proof}

As direct corollaries of \thmref{thm:FN} we have:
\begin{equation}\label{Wi:N0Nhat0}
\begin{array}{rl}
\W_1\oplus\W_2: \; &F'(x,y,z)=-\frac{1}{4}\bigl\{\widehat
N'(Jx,y,z)+\widehat N'(Jx,z,y)\bigr\},\\
\W_{3}: \; &F'(x,y,z)=-\frac{1}{4}\bigl\{N'(Jx,y,z)+N'(Jx,z,y)\bigr\}.
\end{array}
\end{equation}

According to \thmref{thm:FN}, we obtain the following relation for the corresponding traces:
\begin{equation}\label{ta=nu}
\ta'=\frac14\widehat\nu'\circ J,
\end{equation}
where ${\widehat\nu'}(z)={g'}^{ij}\widehat{N}'(e_i,e_j,z)$.
For the traces with respect to the associated metric
$\widetilde{g'}$  of $F'$ and $\widehat N'$, \ie
$\widetilde{\ta'}(z)=\widetilde{g'}^{ij}F'(e_i,e_j,z)$ and
$\widetilde{\widehat\nu'}(z)=\widetilde{g'}^{ij}\widehat N'(e_i,e_j,z)$,
 we have $\widetilde{\ta'}=-\frac14\widehat\nu=\ta'\circ J$
and $\widetilde{\widehat\nu'}=4\ta'=\widehat\nu'\circ J$, respectively.

Then, bearing in mind \eqref{Wi} and the subsequent comments on the pair
of the Nijenhuis tensors, from \thmref{thm:FN} and \eqref{ta=nu} we obtain
immediately the following
\begin{theorem}\label{prop:Wi:N}
The classes of almost complex Norden manifolds
are characterized by the Nijenhuis tensors $N'$ and $\widehat N'$ as follows:
\begin{equation}
\label{Wi:N}
\begin{array}{rll}
\W_{0}: \quad &N'=0, \quad  &\widehat N'=0;\\[0pt]
\W_{1}: \quad &N'=0, \quad
&\widehat N'=\frac{1}{2n}\bigl\{\widehat\nu'\otimes g'+\widetilde{\widehat\nu'}
\otimes\widetilde{g}'\bigr\};\\[0pt]
\W_{2}: \quad &N'=0, \quad
&\widehat\nu'=0;\\[0pt]
\W_{3}: \quad & &\widehat N'=0;\\[0pt]
\W_{1}\oplus\W_2: \quad &N'=0; \quad &
\\[0pt]
\W_{1}\oplus\W_3: \quad &
&\widehat N'=\frac{1}{2n}\bigl\{\widehat\nu'\otimes g'+\widetilde{\widehat\nu'}
\otimes\widetilde{g}'\bigr\};\\[0pt]
\phantom{\W_{1}\oplus}
\W_{2}\oplus\W_3: \quad &&
\widehat\nu'=0;\\
\W_{1}\oplus\W_{2}\oplus\W_3: \quad
&\multicolumn{2}{l}{\quad \text{no
conditions}.}
\end{array}
\end{equation}
\end{theorem}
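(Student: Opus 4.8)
The plan is to exploit that \eqref{NF}, \eqref{NhatF} express $N'$ and $\widehat N'$ linearly in $F'$, while \thmref{thm:FN} inverts this correspondence; thus $(N',\widehat N')$ and $F'$ determine each other, and every condition in \eqref{Wi} transcribes into an equivalent one on $N'$ and $\widehat N'$. The conceptual core is the pair of structural facts recorded before the theorem (and proved in \cite{GaBo}): $\W_3$ is exactly the locus $\widehat N'=0$, and $\W_1\oplus\W_2$ is exactly the locus $N'=0$. Since $F'\mapsto(N',\widehat N')$ is linear, these facts say that $\widehat N'$ annihilates every pure $\W_3$-tensor and $N'$ annihilates every pure $\W_1\oplus\W_2$-tensor; hence for a general $F'=F'_{12}+F'_{3}$ with $F'_{12}\in\W_1\oplus\W_2$ and $F'_{3}\in\W_3$ one has $\widehat N'(F')=\widehat N'(F'_{12})$ and $N'(F')=N'(F'_{3})$. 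This single observation will make every mixed row transparent once the pure rows are settled.

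First I would dispatch the rows needing no trace information. The class $\W_0$ ($F'=0$) is equivalent to $N'=\widehat N'=0$: necessity follows from \eqref{NF}, \eqref{NhatF} and sufficiency from \thmref{thm:FN}. The rows $\W_3$ ($\widehat N'=0$), $\W_1\oplus\W_2$ ($N'=0$), and $\W_1\oplus\W_2\oplus\W_3$ (no condition) are then immediate restatements of the structural facts. Next I would bring in the trace identity \eqref{ta=nu}, $\ta'=\frac14\widehat\nu'\circ J$, together with its consequences $\widehat\nu'=-4\ta'\circ J$ and $\widetilde{\widehat\nu'}=4\ta'$; because $J$ is invertible, $\ta'=0\Leftrightarrow\widehat\nu'=0$. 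Intersecting with the two structural loci then yields $\W_2$ (the conditions $N'=0$ and $\ta'=0$) as $N'=0$, $\widehat\nu'=0$, and $\W_2\oplus\W_3$ (the condition $\ta'=0$ alone) as $\widehat\nu'=0$.

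The decisive and most laborious step, which I expect to be the main obstacle, is pinning down the explicit shape of $\widehat N'$ for $\W_1$. Here $N'=0$, so by \eqref{Wi:N0Nhat0} one may pass freely between $F'$ and $\widehat N'$; I would substitute the defining $\W_1$ expression for $F'$ from \eqref{Wi} into \eqref{NhatF}, expand the $(y\leftrightarrow z)$-symmetrization into its eight metric--Lee-form terms, and reduce them using \eqref{2.1}, \eqref{2.3} and $\widetilde g'(x,y)=g'(x,Jy)$. Reassembling with $\widehat\nu'=-4\ta'\circ J$ and $\widetilde{\widehat\nu'}=4\ta'$ should collapse the whole expression to $\widehat N'=\frac{1}{2n}\bigl\{\widehat\nu'\otimes g'+\widetilde{\widehat\nu'}\otimes\widetilde g'\bigr\}$, while the converse inclusion follows by feeding this form of $\widehat N'$ back through \eqref{Wi:N0Nhat0} to recover the $\W_1$ shape of $F'$. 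The class $\W_1$ is then $N'=0$ together with this form of $\widehat N'$, and $\W_1\oplus\W_3$ is obtained simply by dropping $N'=0$ while retaining the same $\widehat N'$, precisely because $\widehat N'$ ignores the $\W_3$-component. The delicate part of the computation is the $J$-twist and sign bookkeeping dictated by \eqref{N-prop}, \eqref{hatN-prop}, which is exactly what forces the $g'$- and $\widetilde g'$-terms to combine into the stated tensor.
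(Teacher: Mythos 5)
Your proposal is correct and follows essentially the same route as the paper, which derives the theorem "immediately" from the classification \eqref{Wi}, the facts that $\W_3$ and $\W_1\oplus\W_2$ are the loci $\widehat N'=0$ and $N'=0$ respectively, \thmref{thm:FN} with its corollaries \eqref{Wi:N0Nhat0}, and the trace relation \eqref{ta=nu}; you have merely spelled out the linearity/decomposition argument and the $\W_1$ substitution that the paper leaves implicit.
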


\subsection{Natural connections on an almost complex Norden manifold}

Let $\nn$ be a linear connection with a torsion $T^*$ and a
potential $Q^*$ with respect to  $\n'$,
\ie
\[
T^*(x,y)=\nn_x y-\nn_y x-[x,y],\quad Q^*(x,y)=\nn_x y-\n'_x y.
\]
The corresponding (0,3)-tensors are defined by
\[    
T^*(x,y,z)=g'(T^*(x,y),z),\quad Q^*(x,y,z)=g'(Q^*(x,y),z).
\]
These tensors have the same mutual relations as in \eqref{TQ} and
\eqref{QT}.

In  \cite{GaMi87}, it is given a partial decomposition of the space
$\mathcal{T}$ of all torsion (0,3)-tensors $T$ (\ie satisfying
$T(x,y,z)=-T(y,x,z)$) on an almost complex Norden manifold
$(M',J,g')$: $\mathcal{T}=\mathcal{T}_1\oplus\mathcal{T}_2
\oplus\mathcal{T}_3\oplus\mathcal{T}_4$, where $\mathcal{T}_i$
$(i=1,2,3,4)$ are invariant orthogonal subspaces with respect to
the structure group $GL(n,\C)\cap O(n,n)$:
\begin{equation*}
  \begin{array}{l}
    \T_1:\quad T(x,y,z)=-T(Jx,Jy,z)=-T(Jx,y,Jz);\\[0pt]
    \T_2:\quad T(x,y,z)=-T(Jx,Jy,z)=T(Jx,y,Jz);\\[0pt]
    \T_3:\quad T(x,y,z)=T(Jx,Jy,z),\quad \sx T(x,y,z)=0,\\[0pt]
    \T_4:\quad T(x,y,z)=T(Jx,Jy,z),\quad \sx T(Jx,y,z)=0.
  \end{array}
\end{equation*}
Moreover, in  \cite{GaMi87} there are explicitly given the
components $T_i$ of $T\in\mathcal{T}$ in $\mathcal{T}_i$
$(i=1,2,3,4)$.

A linear connection $\nn$ on an almost complex manifold with
Norden metric $(M',J,g')$ is called a \emph{natural connection} if
$\nn J=\nn g'=0$. These conditions are equivalent to $\nn g'=\nn
\widetilde{g}'=0$. The connection $\nn$ is natural if and only if
the following conditions for its potential $Q^*$ are valid:
\begin{gather}\label{3.5}
    F'(x,y,z) = Q^*(x,y,Jz)-Q^*(x,Jy,z),\quad
    Q^*(x,y,z) = -Q^*(x,z,y).
\end{gather}

In terms of the components $T_i$, a linear connection with
torsion $T$ on $(M',J,g')$ is
natural if and only if
\[
    T_2(x,y,z) = \frac{1}{4}N'(x,y,z),\quad
    T_3(x,y,z) = \frac{1}{8}\bigl\{\widehat N'(z,y,x)-\widehat N'(z,x,y)\bigr\}.
\]
The former condition
is given in  \cite{GaMi87} whereas the latter one
follows immediately by \eqref{TQ}, \eqref{QT},
\eqref{NhatF} and \eqref{3.5}.

\subsection{The B-connection and the canonical connection}

In \cite{GaGrMi87}, it is introduced the \emph{B-connection} $\dot{\n}'$ only for the manifolds from the class $\W_1$ by
\begin{equation}\label{Tb}
\dot{\n}'_xy=\n'_xy-\frac{1}{2}J\left(\n'_xJ\right)y.
\end{equation}
Obviously, the B-connection is a natural connection on $(M',J,g')$
and it exists in any class of the considered manifolds. Only on a $\W_0$-manifold,
the B-connection coincides with the Levi-Civita connection.

By virtue of \eqref{TQ}, \eqref{N-prop}, \eqref{hatN-prop}, \eqref{F=NhatN},
from \eqref{Tb} we express the torsion of the B-connection as follows:
\begin{equation}\label{TbNhatN}
\dot{T}'(x,y,z)=\frac18 \bigl\{N'(x,y,z)+\sx N'(x,y,z)+\widehat N'(z,y,x)
-\widehat N'(z,x,y)\bigr\}.
\end{equation}

A natural connection with torsion $\ddot{T}'$ on an almost complex
manifold with Norden metric $(M',J,g')$ is called a
\emph{canonical connection} if $\ddot{T}'$ satisfies the following
condition \cite{GaMi87}
\begin{equation}\label{4.1}
    \ddot{T}'(x,y,z)+\ddot{T}'(y,z,x)-\ddot{T}'(Jx,y,Jz)-\ddot{T}'(y,Jz,Jx)=0.
\end{equation}

In  \cite{GaMi87} it is shown that \eqref{4.1} is equivalent to the
condition
$\ddot{T}'_1=\ddot{T}'_4=0$,
\ie $\ddot{T}'\in\mathcal{T}_2\oplus\mathcal{T}_3$. Moreover,
there it is proved that on every almost complex Norden manifold
there exists a unique canonical
connection $\ddot{\n}'$. We express its torsion in terms of $N'$
and $\widehat N'$ as follows
\begin{equation}\label{4.3}
    \ddot{T}'(x,y,z)=\frac{1}{4}N'(x,y,z)
    +\frac{1}{8}\bigl\{\widehat N'(z,y,x)-\widehat N'(z,x,y)\bigr\}.
\end{equation}

Taking into account \eqref{4.3} and \eqref{TbNhatN}, it is easy to
conclude that $\ddot{\n}'\equiv \dot{\n}'$ is valid
if and only if the condition $N'=\s N'$ holds which is equivalent
to $N'=0$. In other words, on a complex Norden manifold, \ie
$(M',J,g')\in\W_1\oplus\W_2$, the
canonical con\-nect\-ion and the B-connection coincide.

Now, let $(M',J,g')$ be in the class $\W_1$. This is the class of the conformally equivalent manifolds of the K\"ahler manifold with Norden
metric. The conformal equivalence is made with respect to the general conformal transformations of the metric $g'$
defined by
\begin{equation}\label{transf}
    \overline g'= e^{2u}\left\{\cos{2v}\ g'+\sin{2v}\ \widetilde g'\right),
\end{equation}
where $u$ and $v$ are differentiable functions on
$M'$~ \cite{GaGrMi87}. For $v=0$ they are restricted to the usual
conformal transformations. The manifold $(M',J,\overline{g}')$ is
again an almost complex Norden  manifold. An important
subgroup of the general group $C$ of the conformal transformations
\eqref{transf} is the group $C_0$ of the \emph{holomorphic
conformal transformations}, defined by the condition: $u + iv$ is
a holomorphic function, \ie $\D u = \D v \circ J$. Then torsion of
the canonical connection is an invariant of $C_0$, \ie the relation
$\overline{\ddot{T}'}(x,y)=\ddot{T}'(x,y)$ holds with respect to any
transformation of $C_0$.
There are
proved that the curvature tensor of the canonical connection is a
K\"ahler tensor if and only if $(M',J,g')\in\W_1^0$, \ie a manifold in $\W_1$
with closed forms $\ta'$ and $\ta'\circ J$.
Moreover, there are studied conformal invariants of the canonical connection in $\W_1^0$.

%
%




Bearing in mind the conformal invariance of both the basic classes and
the torsion $\ddot{T}'$ of the canonical
connection,
the conditions for $\ddot{T}'$ are used in  \cite{GaMi87} for other characteristics of all classes
of the almost complex Norden manifolds as follows:
\begin{equation}\label{Wi:Tc}
\begin{array}{rl}
\W_0:\; &\ddot{T}'(x,y)=0;\\
\W_1:\; &\ddot{T}'(x,y)=\frac{1}{2n}\left\{\ddot{t}'(x)y-\ddot{t}'(y)x
+\ddot{t}'(Jx)Jy-\ddot{t}'(Jy)Jx\right\};\\[0pt]
\W_2:\; &\ddot{T}'(x,y)=\ddot{T}'(Jx,Jy),\quad \ddot{t}'=0;\\[0pt]
\W_3:\; &\ddot{T}'(Jx,y)=-J\ddot{T}'(x,y);\\
\W_1\oplus\W_2:\; &\ddot{T}'(x,y)=\ddot{T}'(Jx,Jy),\quad \sx \ddot{T}'(x,y,z)=0;\\
\W_1\oplus\W_3:\; &\ddot{T}'(Jx,y)+J\ddot{T}'(x,y)=\frac{1}{n}\bigl\{
\ddot{t}'(Jy)x-\ddot{t}'(y)Jx\bigr\};
\\
\W_2\oplus\W_3:\;  &\ddot{t}'=0;\\
\W_1\oplus\W_2\oplus\W_3:\;  &\text{no conditions},
\end{array}
\end{equation}
where $\ddot{t}'(x)=g'^{ij}\ddot{T}'(x,e_i,e_j)$.
The special class $\W_0$ is characterized by the condition $\ddot{T}'(x,y)=0$, \ie
there $\ddot{\n}'\equiv \n'$ holds.

The classes of the almost complex Norden manifolds are determined  with respect to the Nijenhuis tensors in
\eqref{Wi:N}, the same classes
are characterized by conditions for
the torsion of the canonical
connection in \eqref{Wi:Tc}. By virtue of these results we obtain the following
\begin{theorem}\label{thm:Wi:NhatN}
The classes of the almost complex Norden manifolds $M=\M$ are characterized
by an expression of the torsion $\ddot{T}'$ of the canonical
connection in terms of the Nijenhuis tensors $N$ and $\widehat N$ as follows:
\begin{equation}\label{Wi:Tc:NhatN}
\begin{array}{rl}
\W_1:\; &\ddot{T}'(x,y,z)=\frac{1}{16n}\bigl\{
    \widehat \nu'(x)g'(y,z)+\widehat \nu'(Jx)g'(y,Jz)
    \bigr\}_{[x\leftrightarrow y]};\\[0pt]
\W_2:\; &\ddot{T}'(x,y,z)=\frac{1}{8}\bigl\{
    \widehat N'(z,y,x)-\widehat N'(z,x,y)\bigr\},\quad \ddot{t}'=\widehat\nu'=0;\\[0pt]
\W_3:\; &\ddot{T}'(x,y,z)=\frac{1}{4}N'(x,y,z);\\
\W_1\oplus\W_2:\; &\ddot{T}'(x,y,z)=\frac{1}{8}\bigl\{
    \widehat N'(z,y,x)-\widehat N'(z,x,y)\bigr\};\\
\W_1\oplus\W_3:\; &\ddot{T}'(x,y,z)=\frac14N'(x,y,z)\\
&\phantom{\ddot{T}'(x,y,z)}
+\frac{1}{16n}\bigl\{
    \widehat \nu'(x)g'(y,z)+\widehat \nu'(Jx)g'(y,Jz)
    \bigr\}_{[x\leftrightarrow y]};
\\
\W_2\oplus\W_3:\;  &\ddot{T}'(x,y,z)=\frac{1}{4}N'(x,y,z)
    +\frac{1}{8}\bigl\{\widehat N'(z,y,x)-\widehat N'(z,x,y)\bigr\},\\
&    \ddot{t}'=\widehat \nu'=0.
\end{array}
\end{equation}
The special class $\W_0$ is characterized by $\ddot{T}'=0$ and
the whole class $\W_1\oplus\W_2\oplus\W_3$ --- by \eqref{4.3} only.

Moreover, bearing in mind the classifications with respect to the
tensor $F'$ and the torsion $\ddot{T}'$ in \cite{GaBo} and \cite{GaMi87},
respectively, we have:
\begin{itemize}
\item $M\in\W_1\oplus\W_2$ if and only if $\ddot{T}'\in\T_3$;
\item $M\in\W_1$ if and only if $\ddot{T}'\in\T_3^1$, where $\T_3^1$
is the subclass of $\T_3$ with the vectorial torsions\footnote{A \emph{vectorial torsion}
is a torsion which is essentially defined by some vector field on the manifold and its metrics.};
\item $M\in\W_2$ if and only if   $\ddot{T}'\in\T_3^0$,
where $\T_3^0$ is the subclass of $\T_3$ with $\ddot{t}'=0$;
\item $M\in\W_3$ if and only if $\ddot{T}'\in\T_2$;
\item $M\in\W_1\oplus\W_3$ if and only if $\ddot{T}'\in\T_2\oplus\T_3^1$;
\item $M\in\W_2\oplus\W_3$ if and only if $\ddot{T}'\in\T_2\oplus\T_3^0$;
\item $M\in\W_1\oplus\W_2\oplus\W_3$ if and only if $\ddot{T}'\in\T_2\oplus\T_3$.
\end{itemize}
\end{theorem}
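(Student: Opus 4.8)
The plan is to obtain every line of \eqref{Wi:Tc:NhatN} by feeding the class-by-class description of the pair $(N',\widehat N')$ supplied by \thmref{prop:Wi:N} into the universal formula \eqref{4.3} for the canonical torsion, and then to read off the membership statements $\W_i\leftrightarrow\T_j$ from the observation that \eqref{4.3} already displays $\ddot{T}'$ as the sum of its $\T_2$- and $\T_3$-components.

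First I would dispose of the two extreme basic classes, where \eqref{4.3} collapses immediately. In $\W_3$ we have $\widehat N'=0$ by \eqref{Wi:N}, so only the first summand of \eqref{4.3} survives and $\ddot{T}'(x,y,z)=\frac14 N'(x,y,z)$. Dually, in $\W_1\oplus\W_2$ (hence in its subclass $\W_2$) we have $N'=0$, the first summand drops out, and $\ddot{T}'(x,y,z)=\frac18\bigl\{\widehat N'(z,y,x)-\widehat N'(z,x,y)\bigr\}$; the extra traces $\ddot{t}'=\widehat\nu'=0$ recorded for $\W_2$ are read off from the $\W_2$ rows of \eqref{Wi:N} and \eqref{Wi:Tc}, linked through \eqref{ta=nu}. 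The two mixed rows $\W_1\oplus\W_3$ and $\W_2\oplus\W_3$ then arise by simply adding the surviving $N'$- and $\widehat N'$-contributions, the orthogonality of the decomposition guaranteeing that no cross terms interfere.

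The only genuinely computational row is $\W_1$. Here $N'=0$, while $\widehat N'$ takes the vectorial form of \eqref{Wi:N}, which, once its symmetry in the first two slots is used, reads $\widehat N'(a,b,c)=\frac{1}{2n}\bigl\{\widehat\nu'(c)g'(a,b)+\widetilde{\widehat\nu'}(c)\widetilde g'(a,b)\bigr\}$. Inserting this into $\frac18\bigl\{\widehat N'(z,y,x)-\widehat N'(z,x,y)\bigr\}$, replacing $\widetilde{\widehat\nu'}(\cdot)=\widehat\nu'(J\cdot)$ and $\widetilde g'(\cdot,\cdot)=g'(\cdot,J\cdot)$, and noticing that the second term of the bracket is precisely the first with $x$ and $y$ interchanged, produces
\[
\ddot{T}'(x,y,z)=\frac{1}{16n}\bigl\{\widehat\nu'(x)g'(y,z)+\widehat\nu'(Jx)g'(y,Jz)\bigr\}_{[x\leftrightarrow y]},
\]
as claimed. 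Although this is pure bookkeeping, it is the step where the anti-isometry $g'(Jx,Jy)=-g'(x,y)$ and the trace identities around \eqref{ta=nu} have to be handled carefully, and it is the main place an error could creep in.

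For the list of equivalences I would start from the fact that \eqref{4.3} writes $\ddot{T}'=T_2+T_3$ with $T_2=\frac14 N'\in\T_2$ and $T_3=\frac18\bigl\{\widehat N'(z,y,x)-\widehat N'(z,x,y)\bigr\}\in\T_3$, this sum being direct because a canonical torsion lies in $\T_2\oplus\T_3$. Hence $T_2=0\Leftrightarrow N'=0\Leftrightarrow M\in\W_1\oplus\W_2$ yields the equivalence with $\T_3$, and $T_3=0\Leftrightarrow\widehat N'=0\Leftrightarrow M\in\W_3$ yields the one with $\T_2$; the $\W_1$ computation above shows that on $\W_1$ the component $T_3$ is vectorial and so lands in $\T_3^1$, whereas the vanishing $\widehat\nu'=0$ characterising $\W_2$ is equivalent to $\ddot{t}'=0$ and places its torsion in $\T_3^0$. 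The four remaining equivalences follow by taking direct sums of these. The delicate points are the two injectivity assertions $T_3=0\Leftrightarrow\widehat N'=0$ and, inside $\T_3$, $\ddot{t}'=0\Leftrightarrow\widehat\nu'=0$; both rest on the symmetries \eqref{hatN-prop} of $\widehat N'$ together with the trace relations stated after \eqref{ta=nu}, and they are exactly what upgrades each correspondence $\W_i\leftrightarrow\T_j$ from an implication to a genuine equivalence.
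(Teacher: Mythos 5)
Your proposal is correct and follows essentially the same route as the paper: substitute the class-by-class expressions of $N'$ and $\widehat N'$ from \eqref{Wi:N} into the universal torsion formula \eqref{4.3}, and read off the $\T_2$- and $\T_3$-components to get both the formulas in \eqref{Wi:Tc:NhatN} and the membership equivalences. You are merely more explicit than the paper about the $\W_1$ substitution and about the injectivity points needed to upgrade the correspondences to equivalences (which the paper delegates to the cited classifications), but these are elaborations, not a different argument.
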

\begin{proof}
Let $(M',J,g')$ be a complex Norden manifold, \ie $(M',J,g')\in\W_1\oplus\W_2$.
According to \eqref{4.3} and $N'=0$ in this case, we have
$\ddot{T}'=\ddot{T}'_3$, \ie
    $\ddot{T}'\in\mathcal{T}_3$ and the expression  $\ddot{T}'(x,y,z)=\frac{1}{8}\bigl\{
    \widehat N'(z,y,x)-\widehat N'(z,x,y)\bigr\}$ is obtained.
Applying \eqref{Wi:N} to the latter equality, we determine the basic classes
$\W_1$ and $\W_2$ as is given in \eqref{Wi:Tc:NhatN} and the corresponding
subclasses $\T_3^1$ and $\T_3^0$, respectively.
Taking into account the relation between the corresponding traces
$\widehat \nu'=8\ddot{t}'$, which is a consequence of the equality for
$\W_1\oplus\W_2$, we obtain the  characterization for these two basic classes in \eqref{Wi:Tc}.

Let $(M',J,g')$ be a quasi-K\"ahler manifold with Norden metric, \ie $(M',J,g')\in\W_3$.
By virtue of \eqref{4.3} and $\widehat N'=0$ for such a manifold,
we have $\ddot{T}'=\ddot{T}'_2$, \ie
    $\ddot{T}'\in\mathcal{T}_2$ and therefore we give
$\ddot{T}'=\frac{1}{4}N'$.
Obviously, the form of $\ddot{T}'$ in the latter equality satisfies
the condition for $\W_3$ in \eqref{Wi:Tc}.

In a similar way we get for the rest classes $\W_1\oplus\W_3$ and $\W_2\oplus\W_3$.
The conditions of these two classes, given in \eqref{Wi:Tc},
are consequences of the corresponding equalities in \eqref{Wi:Tc:NhatN}.
The case of the whole class $\W_1\oplus\W_2\oplus\W_3$ was discussed above.
\end{proof}

The canonical connections on quasi-K\"ahler manifolds with Norden
metric are considered in more details in  \cite{Mek09}. There are given the following formulae for
the potential $\ddot{Q}'$ and the torsion $\ddot{T}'$ on a
$\W_3$-manifold:
\[
\begin{array}{l}
    \ddot{Q}'(x,y) = \frac{1}{4}\left\{\left(\n'_y J\right)Jx-\left(\n'_{Jy} J\right)x
    +2\left(\n'_{x}
    J\right)Jy\right\},\\
    \ddot{T}'(x,y) = \frac{1}{2}\left\{\left(\n'_x J\right)Jy+\left(\n'_{Jx} J\right)y\right\}.
\end{array}
\]
%
Moreover, some properties for the curvature and the torsion of the  canonical connection are obtained.




\subsection{The KT-connection}

In  \cite{Mek-2}, it is proved that a natural connection $\dddot{\n'}$
with totally skew-symmetric torsion, called a
\emph{KT-connection}, exists on an almost complex Norden manifold
$(M',J,g')$ if and only if $(M',J,g')$ belongs to $\W_3$, \ie the
manifold is quasi-K\"ahlerian with Norden metric. Moreover,
the KT-connection is unique and it is determined by its potential
\begin{equation}\label{QQQ3}
    \dddot{Q'}(x,y,z)=-\frac{1}{4}\mathop{\s} \limits_{x,y,z} F'(x,y,Jz).
\end{equation}

As mentioned above, the
canonical con\-nect\-ion and the B-connection coincide on $(M',J,g')\in\W_1\oplus\W_2$
whereas the
KT-connection does not exist there.

The following natural connections on $(M',J,g')$ are studied on a quasi-K\"ahler
manifold with Norden metric: the B-connection $\dot{\n}'$
(\cite{Mek-1}), the
canonical connection $\ddot{\n}'$ (\cite{Mek09}) and the KT-connection $\dddot{\n}'$  (\cite{Mek-2}).

From the relations \eqref{4.3} and \eqref{TbNhatN} for a $\W_3$-manifold follow
\begin{equation}\label{TcTbNW3}
\dot{T}'(x,y,z)=\frac{1}{8}\bigl\{N'(x,y,z)+\sx N'(x,y,z)\bigr\},\quad
\ddot{T}'(x,y,z)=\frac{1}{4}N'(x,y,z).
\end{equation}
The equalities \eqref{TQ} and \eqref{QQQ3} yield $\dddot{T}'(x,y,z)=
-\frac{1}{2}\mathop{\s} \limits_{x,y,z} F'(x,y,Jz)$, which by
\eqref{Wi:N0Nhat0} for $\W_3$ and \eqref{N-prop} implies
\begin{equation}\label{TkNW3}
\dddot{T}'(x,y,z)=\frac{1}{4}\sx N'(x,y,z).
\end{equation}

Then from \eqref{TcTbNW3} and \eqref{TkNW3} we have  the relation
$\dot{T}'=\frac{1}{2}\left(\ddot{T}'+ \dddot{T}'\right)$, which by
\eqref{QT} is equivalent to $\dot{Q}'=\frac{1}{2}\left(\ddot{Q}'+ \dddot{Q}'\right)$.
Therefore, as it is shown in  \cite{Mek09}, the B-connection
is the \emph{average} connection for the canonical connection and the
KT-connection on a quasi-K\"ahler manifold with Norden metric
, \ie
$
\dot{\n'}=\frac{1}{2}\left(\ddot{\n'}+ \dddot{\n'}\right)$.


\section{Almost contact manifolds with B-metric}\label{sec:odd}

Let $(M,\f,\xi,\eta)$ be an almost contact manifold,  \ie  $M$ is
a $(2n+1)$-dimen\-sion\-al differentiable manifold with an almost
contact structure $(\f,\xi,\eta)$ consisting of an endomorphism
$\f$ of the tangent bundle, a vector field $\xi$ and its dual
1-form $\eta$ such that the following relations are valid:
\begin{equation}\label{str1}
\f\xi = 0,\quad \f^2 = -\Id + \eta \otimes \xi,\quad
\eta\circ\f=0,\quad \eta(\xi)=1.
\end{equation}
Later on, let us equip $(M,\f,\xi,\eta)$ with a pseudo-Riemannian
metric $g$ of signature $(n+1,n)$ determined by
\begin{equation}\label{str2}
g(\f x, \f y ) = - g(x, y ) + \eta(x)\eta(y)
\end{equation}
for arbitrary differentiable vector fields $x$, $y$ on $M$.
Then $(M,\f,\xi,\eta,g)$ is called an almost contact manifold with
B-metric or an \emph{almost contact B-metric manifold}.
The associated metric $\widetilde{g}$ of $g$ on $M$ is defined by the equality
$\widetilde{g}(x,y)=g(x,\f y)\allowbreak+\eta(x)\eta(y)$. Both
metrics $g$ and $\widetilde{g}$ are necessarily of signature
$(n+1,n)$. The manifold $(M,\f,\xi,\eta,\widetilde{g})$ is also an
almost contact B-metric manifold. \cite{GaMiGr}

Let us remark that the $2n$-dimensional contact distribution
$H=\ker(\eta)$, generated by the contact 1-form $\eta$, can be
considered as the horizontal distribution of the sub-Riemannian
manifold $M$. Then $H$ is endowed with an almost complex structure
determined as $\f|_H$ -- the restriction of $\f$ on $H$, as well
as a Norden metric $g|_H$, \ie
$g|_H(\f|_H\cdot,\f|_H\cdot)=-g|_H(\cdot,\cdot)$. Moreover, $H$
can be considered as an $n$-dimensional complex Riemannian
manifold with a complex Riemannian metric
$g^{\C}=g|_H+i\widetilde{g}|_H$~ \cite{GaIv92}.
By this reason we refer to these manifolds as \emph{almost contact complex Riemannian manifolds}.
They are investigated and studied
in \cite{GaMiGr,Man4,Man31,ManGri1,ManGri2,ManIv38, ManIv36,NakGri2}.
The structure group of these manifolds is
$\left(GL(n,\mathbb{C})\cap
O(n,n)\right)\allowbreak\times I_1$.

Further in this section, $x$, $y$, $z$  will stand for
arbitrary differentiable vector fields on $M$ (or vectors in the
tangent space of $M$ at an arbitrary point of $M$). Moreover,
let $\left\{e_i;\xi\right\}_{i=1}^{2n}$ denote an arbitrary basis  of the
tangent space of $M$ at an arbitrary point in $M$ and $g^{ij}$
be the corresponding components of the inverse matrix of $g$.


The fundamental tensor $F$ of type (0,3) on the manifold $\M$ is defined by
$
F(x,y,z)=g\bigl(\left( \nabla_x \f \right)y,z\bigr)$,
where $\n$ is the Levi-Civita connection for $g$ and the following properties are valid: \cite{GaMiGr}
\begin{equation}\label{F-prop}
F(x,y,z)=F(x,z,y)=F(x,\f y,\f z)+\eta(y)F(x,\xi,z) +\eta(z)F(x,y,\xi).
\end{equation}
The relations of the covariant derivatives $\n\xi$ and $\n\eta$ with $F$ are:
\[
    \left(\n_x\eta\right)y=g\left(\n_x\xi,y\right)=F(x,\f y,\xi).
\]

The following 1-forms, called Lee forms, are associated with $F$:
\begin{equation*}
\begin{array}{c}
\ta(z)=g^{ij}F(e_i,e_j,z),\quad \ta^*(z)=g^{ij}F(e_i,\f
e_j,z),\quad \om(z)=F(\xi,\xi,z).
\end{array}
\end{equation*}
Obviously, the equalities $\ta^*\circ\f=-\ta\circ\f^2$ and
$\om(\xi)=0$ are valid. For the corresponding traces $\widetilde\ta$
and $\widetilde\ta^*$ with respect to $\widetilde g$ we have
$\widetilde\ta=-\ta^*$ and $\widetilde\ta^*=\ta$.

A classification with respect to $F$ of the almost contact
B-metric manifolds is given in  \cite{GaMiGr}. This classification
includes eleven basic classes $\F_1$, $\F_2$, $\dots$, $\F_{11}$.
Their intersection is the special class $\F_0: F(x,y,z)=0$.

Further, we use the following characteristic conditions of the
basic classes: \cite{GaMiGr,Man8}
\begin{equation*}
\begin{array}{rl}
\F_{1}: &F(x,y,z)=\frac{1}{2n}\bigl\{g(x,\f y)\ta(\f z)+g(\f
x,\f y)\ta(\f^2 z)
\bigr\}_{(y\leftrightarrow z)};\\[0pt]
\F_{2}: &F(\xi,y,z)=F(x,\xi,z)=0,\quad
              \sx F(x,y,\f z)=0,\quad \ta=0;\\[0pt]
\F_{3}: &F(\xi,y,z)=F(x,\xi,z)=0,\quad
              \sx F(x,y,z)=0;\\[0pt]
\F_{4}: &F(x,y,z)=-\frac{\ta(\xi)}{2n}\bigl\{g(\f x,\f y)\eta(z)
+g(\f x,\f z)\eta(y)\bigr\};\\[0pt]
\F_{5}: &F(x,y,z)=-\frac{\ta^*(\xi)}{2n}\bigl\{g( x,\f y)\eta(z)
+g(x,\f z)\eta(y)\bigr\};\\[0pt]
\F_{6/7}: &F(x,y,z)=F(x,y,\xi)\eta(z)+F(x,z,\xi)\eta(y),\quad \\[0pt]
                &F(x,y,\xi)=\pm F(y,x,\xi)=-F(\f x,\f y,\xi),\quad \ta=\ta^*=0; \\[0pt]
\F_{8/9}: &F(x,y,z)=F(x,y,\xi)\eta(z)+F(x,z,\xi)\eta(y),\quad
\\[0pt]
                &F(x,y,\xi)=\pm F(y,x,\xi)=F(\f x,\f y,\xi); \\[0pt]
\F_{10}: &F(x,y,z)=F(\xi,\f y,\f z)\eta(x); \\[0pt]
\F_{11}:
&F(x,y,z)=\eta(x)\left\{\eta(y)\om(z)+\eta(z)\om(y)\right\}.
\end{array}
\end{equation*}

\subsection{The pair of the Nijenhuis tensors}

An almost contact structure $(\f,\xi,\eta)$ on $M$ is called
\emph{normal} and respectively $(M,\f,\xi,\eta)$ is a \emph{normal
almost contact manifold} if the corresponding almost complex
structure $J'$ generated on $M'=M\times \R$ is integrable 
 \cite{SaHa}. The almost contact structure is normal if and only if
the Nijenhuis tensor of $(\f,\xi,\eta)$ is zero \cite{Blair}.

The Nijenhuis tensor $N$ of the almost contact structure is
defined by
$N := [\f, \f]+ \D{\eta}\otimes\xi$,
where $[\f, \f](x, y)=\left[\f x,\f
y\right]+\f^2\left[x,y\right]-\f\left[\f x,y\right]-\f\left[x,\f
y\right]$ and $\D{\eta}$ is the exterior derivative of $\eta$.

In \cite{ManIv36}, it is defined the symmetric (1,2)-tensor $\widehat N$
for a $(\f,\xi,\eta)$-structure by
$\widehat N=\{\f,\f\}+(\mathcal L_{\xi}g)\otimes \xi$,
where $\mathcal L$ denotes the Lie derivative and $\{\f ,\f\}$ is given by
$\{\f ,\f\}(x,y)=\{\f x,\f y\}+\f^2\{x,y\}-\f\{\f
x,y\}-\f\{x,\f y\}$ for $\{x,y\}=\nabla_xy+\nabla_yx$.
The tensor $\widehat N$ is also called the  \emph{associated
Nijenhuis tensor} for $(\f,\xi,\eta)$.

Obviously, $N$ is antisymmetric and $\widehat N$ is symmetric, \ie
$
N(x,y)=-N(y,x)$ and $\widehat N(x,y)=\widehat N(y,x)$.

The Nijenhuis tensors $N$ and
$\widehat N$ play a fundamental role in natural connections (\ie such connections that the tensors of the structure $(\f,\xi,\eta,g)$  are parallel with respect to them) on an almost contact B-metric manifold.
The torsions and the potentials of these connections
are expressed by these two tensors. By this reason
we characterize the classes of the considered manifolds in terms of $N$ and
$\widehat N$.

The corresponding tensors of type (0,3) are denoted by the same
letters as follows 
$N(x,y,z)=g(N(x,y),z)$, $\widehat N(x,y,z)=g(\widehat N(x,y),z)$.
Both tensors $N$ and $\widehat N$ are expressed in
terms of $F$ as follows  \cite{ManIv36}
\begin{gather}
N(x,y,z) = \bigl\{F(\f x,y,z)-F(x,y,\f z)+\eta(z)F(x,\f
y,\xi)\bigr\}_{[x\leftrightarrow
y]},\label{enu}\\[0pt]
\widehat N(x,y,z) = \bigl\{F(\f x,y,z)-F(x,y,\f z)+\eta(z)F(x,\f
y,\xi)\bigr\}_{(x\leftrightarrow y)}.\label{enhat}
\end{gather}

Bearing in mind \eqref{str1}, \eqref{str2} and \eqref{F-prop}, from \eqref{enu} and \eqref{enhat}
 we obtain the following properties of the Nijenhuis tensors on
 an arbitrary almost contact B-metric manifold:
\begin{gather}
N(x, \f y,\f z)=N(x, \f^2 y,\f^2 z),\quad N(\f x, y,\f z)=
N(\f^2 x, y,\f^2 z),\nonumber\\ 
N(\f x,\f  y, z)=-N(\f^2 x,\f^2 y,z),\quad
\widehat N(x, \f y,\f z)=\widehat N(x, \f^2 y,\f^2 z),\nonumber\\
\widehat N(\f x, y,\f z)=\widehat N(\f^2 x, y,\f^2 z),\quad
\widehat N(\f x,\f  y, z)=-\widehat N(\f^2 x,\f^2 y,z),\nonumber\\
{N(\xi, \f y,\f z)+N(\xi, \f z,\f y)+\widehat N(\xi, \f y,\f z)
+\widehat N(\xi, \f z,\f y)=0.}\nonumber
\end{gather}

It is known that the class of the normal almost contact B-metric
manifolds, \ie $N=0$, is
$\F_1\oplus\F_2\oplus\F_4\oplus\F_5\oplus\F_6$.
According to \cite{ManIv36}, the class of the almost contact
B-metric manifolds with
$\widehat{N}=0$ is $\F_3\oplus\F_7$.
The latter two statements follow from \cite{ManIv38} and \cite{ManIv36},
where the following form of the Nijenhuis tensors
for each of the basic classes $\F_i$ $(i=1,2,\dots,11)$ of $\M$ is given:
\begin{equation}\label{Fi:N}
\begin{array}{rl}
\F_1:\quad &N(x,y)=0,\quad \widehat N(x,y)=\frac{2}{n}\bigl\{g(\f x,\f y)\f \ta^{\sharp}+g(x,\f
y)\ta^{\sharp}\bigr\};\\
\F_2:\quad &N(x,y)=0,\quad \widehat N(x,y)=2\left\{\left(\n_{\f
x}\f\right)y-\f\left(\n_{x}\f\right)y\right\};\\
\F_3:\quad &N(x,y)=2\left\{\left(\n_{\f
x}\f\right)y-\f\left(\n_{x}\f\right)y\right\},\quad
\widehat N(x,y)=0;\\
\F_4:\quad &N(x,y)=0,\quad \widehat N(x,y)=\frac{2}{n}\ta(\xi)g(x,\f y) \xi;\\
\F_5:\quad &N(x,y)=0,\quad \widehat N(x,y)=-\frac{2}{n}\ta^*(\xi)g(\f x,\f y) \xi;\\
\F_6:\quad &N(x,y)=0,\quad \widehat N(x,y)=4\left(\n_{x}\eta\right)y\ \xi;\\
\F_7:\quad &N(x,y)=4\left(\n_{x}\eta\right)y\ \xi,\quad
\widehat N(x,y)=0;\\
\F_8:\quad &N(x,y)=2\left\{\eta(x)\n_{y}\xi-\eta(y)\n_{x}\xi\right\},\quad\\ 
&\widehat N(x,y)=-2\left\{\eta(x) \n_{y}\xi+\eta(y)
\n_{x}\xi\right\};\\
\F_9:\quad &N(x,y)=2\left\{\eta(x)\n_{y}\xi-\eta(y)\n_{x}\xi\right\},\quad \\
&\widehat N(x,y)=-2\left\{\eta(x) \n_{y}\xi+\eta(y)
\n_{x}\xi\right\};\\
\F_{10}:\quad &N(x,y)=-\eta(x)\f\left(\n_{\xi}\f\right)y+\eta(y)\f\left(\n_{\xi}\f\right)x,\quad \\
&\widehat N(x,y)=-\eta(x)\f \left(
\n_{\xi}\f\right)y-\eta(y)\f \left( \n_{\xi}\f\right)x;\\
\F_{11}:\quad &N(x,y)=\left\{\eta(x)\om(\f y)-\eta(y)\om(\f x)\right\}\xi,\quad \\
&\widehat N(x,y)=\left\{\eta(x)\om(\f y)+\eta(y)\om(\f
x)\right\} \xi,
\end{array}
\end{equation}
where $\ta^{\sharp}$
and $\om^{\sharp}$ are the corresponding vectors of $\ta$ and $\om$ with respect to $g$.

In \cite{IvMaMa14}, the tensor $F$ is expressed by the Nijenhuis tensors
on an arbitrary $\M$ as follows:
\begin{equation}\label{FNhatN}
\begin{split}
F(x,y,z)&=-\frac14\bigl\{N(\f x,y,z)+N(\f x,z,y)
+\widehat N(\f x,y,z)+\widehat N(\f x,z,y)\bigr\}\\
&\phantom{=\ }+\frac12\eta(x)\bigl\{N(\xi,y,\f z)+\widehat
N(\xi,y,\f z)+\eta(z)\widehat N(\xi,\xi,\f y)\bigr\}.
\end{split}
\end{equation}
As corollaries, in the cases when $N=0$ or $\widehat N=0$,
the latter relation takes the following form, respectively:
\begin{gather}
F(x,y,z)=-\frac14\bigl\{\widehat N(\f x,y,z)+\widehat N(\f x,z,y)\bigr\}\nonumber\\
\phantom{F(x,y,z)=}
+\frac12\eta(x)\bigl\{\widehat
N(\xi,y,\f z)+\eta(z)\widehat N(\xi,\xi,\f y)\bigr\},\label{F_hatN}\nonumber\\
F(x,y,z)=-\frac14\bigl\{N(\f x,y,z)+N(\f x,z,y)\bigr\}
+\frac12\eta(x)N(\xi,y,\f z).\label{F_N}\nonumber
\end{gather}

\subsection{Natural connections on an almost contact B-metric manifold}

Let $D$ be a linear connection on $\M$ and let us denote its torsion and potential (with respect to $\n$)
by $T$ and $Q$, respectively.
The corresponding tensors of type (0,3) are determined by
$T(x,y,z)=g(T(x,y),z)$ and $Q(x,y,z)=g\left(Q(x,y),z\right)$. The
relations \eqref{TQ} and \eqref{QT} are valid.

In  \cite{ManIv36}, it is given a classification of all linear
connections on the almost contact B-metric manifolds with respect
to their torsions $T$ in 15 basic classes $\T_{i}$ $(i=1,\dots,15)$ (which are invariant and orthogonal subspaces with respect to
the structure group) as
follows:
\begin{equation*}
\begin{split}
\T_{1/2}
:\quad
    &T(\xi,y,z)=T(x,y,\xi)=0,\quad \\&T(x,y,z)=-T(\f x,\f y,z)=- T(x,\f y,\f z),
    \quad t\nntstile{=}{\neq} 0;\\[0pt]
\T_{3}
:\quad
    &T(\xi,y,z)=T(x,y,\xi)=0,\quad \\&T(x,y,z)=-T(\f x,\f y,z)= T(x,\f y,\f z);\\[0pt]
\T_{4/5}
:\quad &T(\xi,y,z)=T(x,y,\xi)=0,\quad \\&T(x,y,z)-T(\f x,\f y,z)=\sx
T(x,y,z)=0,\quad
t\nntstile{=}{\neq} 0;\\[0pt]
\T_{6}
:\quad &T(\xi,y,z)=T(x,y,\xi)=0,\quad \\&T(x,y,z)-T(\f x,\f y,z)=\sx T(\f x,y,z)=0;\\[0pt]
\T_{7/8}
:\quad &T(x,y,z)=\eta(z)T(\f^2 x,\f^2 y,\xi),\quad
                T(x,y,\xi)=\mp T(\f x,\f y,\xi);\\[0pt]
\end{split}
\end{equation*}
\begin{equation*}
\begin{split}
\T_{9/10}
:\quad &T(x,y,z)=\eta(x)T(\xi,\f^2 y,\f^2 z)-\eta(y)T(\xi,\f^2 x,\f^2 z),\quad \\[0pt]
                &T(\xi,y,z)= T(\xi,z,y)=-T(\xi,\f y,\f z),
                \quad t\nntstile{=}{\neq} 0,\quad t^*\nntstile{\neq}{=} 0; \\[0pt]
\T_{11}
:\quad &T(x,y,z)=\eta(x)T(\xi,\f^2 y,\f^2 z)-\eta(y)T(\xi,\f^2 x,\f^2 z),\quad \\[0pt]
                &T(\xi,y,z)= T(\xi,z,y)=-T(\xi,\f y,\f z),\quad t= 0,\quad t^*=0; \\[0pt]
\T_{12}
:\quad &T(x,y,z)=\eta(x)T(\xi,\f^2 y,\f^2 z)-\eta(y)T(\xi,\f^2
x,\f^2 z),\quad\\&
T(\xi,y,z)=- T(\xi,z,y)=-T(\xi,\f y,\f z); \\[0pt]
\T_{13/14}
:\quad &T(x,y,z)=\eta(x)T(\xi,\f^2 y,\f^2 z)-\eta(y)T(\xi,\f^2
x,\f^2 z),
\quad \\& T(\xi,y,z)=\pm T(\xi,z,y)=T(\xi,\f y,\f z); \\[0pt]
\T_{15}
:\quad
&T(x,y,z)=\eta(z)\left\{\eta(y)\hat{t}(x)-\eta(x)\hat{t}(y)\right\},
\end{split}
\end{equation*}
where the torsion forms associated with $T$ are defined by
\begin{equation*}
t(x)=g^{ij}T(x,e_i,e_j),\quad t^{*}(x)=g^{ij}T(x,e_i,\f e_j),\quad
\hat{t}(x)=T(x,\xi,\xi).
\end{equation*}
Moreover, in  \cite{ManIv36} there are explicitly given the
components $T_i$ of $T\in\mathcal{T}$ in $\mathcal{T}_i$
$(i=1,\dots,15)$.

A linear connection $D$ is called a \emph{natural connection} on
$(M,\f,\allowbreak\xi,\eta,g)$ if the almost contact structure and
the B-metric are parallel with respect to
$D$,  \ie  $D\f=D\xi=D\eta=Dg=0$ \cite{Man31}.
As a corollary, we have also $D\widetilde{g}=0$.
According to  \cite{ManIv36}, a necessary and sufficient condition
for  a linear connection $D$ to be natural on $\M$ is $D\f=Dg=0$.

It is easy to establish  (see, e.g.  \cite{Man31}) that
%
a linear connection $D$ is a natural connection on an almost contact
B-metric manifold if and only if %
\begin{equation*}
 Q(x,y,\f z)-Q(x,\f y,z)=F(x,y,z),\quad
 Q(x,y,z)=-Q(x,z,y).
\end{equation*}
%

Let us remark that the condition a linear connection to be natural does not
imply that some of the basic classes $\T_i$ $(i=1,\dots,15)$ to be empty for natural connections.

In \cite{ManIv36}, it is proved that an almost contact B-metric manifold $M=\M\in\F_i\setminus\F_0$ is
normal, \ie $N=0$, (respectively, has
$\widehat{N}=0$) if the torsion of an arbitrary natural
connection on $M$ belongs to
$\T_{4}\oplus\T_{5}\oplus\T_{9}\oplus\T_{10}\oplus\T_{11}$ (respectively, $\T_{3}\oplus\T_{7}$).

\subsection{The $\f$B-connection and the $\f$-canonical connection}

In  \cite{ManGri2}, it is introduced a natural connection on $\M$
by
\begin{equation}\label{fB}
    \dot{D}_xy=\n_xy+\dot{Q}(x,y),\quad \dot{Q}(x,y)=
    \frac{1}{2}\bigl\{\left(\n_x\f\right)\f
y+\left(\n_x\eta\right)y\ \xi\bigr\}-\eta(y)\n_x\xi.
\end{equation}
In
 \cite{ManIv37}, the connection determined by \eqref{fB} is called
a \emph{$\f$B-connection}. It is studied for some classes of the considered manifolds
in  \cite{ManGri1,ManGri2,Man3,Man4,ManIv37} with respect to  properties
of the torsion and the curvature as well as the conformal geometry. The restriction of the
$\f$B-connection $\dot{D}$ on $H$ coincides with the B-connection
$\dot{\n}'$ on the corresponding almost complex Norden manifold,
given in \eqref{Tb} and studied for the class $\W_1$ in  \cite{GaGrMi87}.

The torsion of the $\f$B-connection has the form
\begin{equation}\label{T0}
\dot{T}(x,y,z) = \frac{1}{2}\bigl\{F(x,\f y,z)+\eta(z)F(x,\f y,\xi)+2\eta(x)F(y,\f
z,\xi)\bigr\}_{[x\leftrightarrow y]}.
\end{equation}
Then it belongs to
$\T_{3}\oplus\T_{4}\oplus\cdots\oplus\T_{15}$, according to \cite{ManIv36}.



Using \eqref{FNhatN}, \eqref{T0} and  the 
orthonormal decomposition $x=hx+vx$, where $hx=-\f^2x$, $vx=\eta(x)\xi$, we give the expression of the torsion of the $\f$B-connection in terms of the Nijenhuis tensors as follows
\begin{equation}\label{T0N}
\begin{split}
\dot{T}(x,y,z) &=\frac{1}{8}\bigl\{N(hx,hy,hz)+\sx N(hx,hy,hz)+\N(hz,hy,hx)-\N(hz,hx,hy)\bigr\}\\
&+\frac{1}{4}\bigl\{2N(vx,hy,hz)+ N(hy,hz,vx)+2\N(vx,hy,hz)+ N(hy,hz,vx)\\
&
+N(hx,hy,vz)+ N(vz,hx,hy)-\N(vz,hx,hy)-2\N(vz,vx,hy)\bigr\}_{[x\leftrightarrow y]}.
\end{split}
\end{equation}

Taking into account \eqref{T0}, \eqref{T0N} and \eqref{Fi:N},
we obtain for the manifolds from $\F_3\oplus\F_7$ the following
\begin{equation}\label{T0N-F37}
\begin{split}
\dot{T}(x,y,z) &=\frac{1}{8}\bigl\{N(hx,hy,hz)+\sx N(hx,hy,hz)\bigr\}
\\
&+\frac{1}{4}\bigl\{N(hx,hy,vz)+\sx N(hx,hy,vz)\bigr\}.
\end{split}
\end{equation}
Therefore, using the notation $hN(x,y,z)=N(hx,hy,hz)$,
for the basic classes with vanishing $\N$ we have:
\begin{equation}\label{T0N-F3F7}
\F_3:\quad \dot{T}=\frac{1}{8}\bigl\{hN+\s hN\bigr\},\qquad
\F_7:\quad
\dot{T}=\frac{1}{2}\bigl\{\D\eta\otimes\eta+\eta\wedge\D\eta\bigr\}.
\end{equation}

A natural connection $\ddot{D}$ is called a \emph{$\f$-canonical
connection} on $(M,\f,\xi,\allowbreak\eta,g)$ if its torsion
$\ddot{T}$ satisfies the following identity: \cite{ManIv38}
\begin{equation*}
\begin{split}
    \bigl\{\ddot{T}(x,y,z)&-\ddot{T}(x,\f y,\f z)
    -\eta(x)\left\{\ddot{T}(\xi,y,z)
    -\ddot{T}(\xi, \f y,\f z)\right\}\\[0pt]
    &-\eta(y)\left\{\ddot{T}(x,\xi,z)-\ddot{T}(x,z,\xi)-\eta(x)
    \ddot{T}(z,\xi,\xi)\right\}\bigr\}_{[y\leftrightarrow z]}=0.
\end{split}
\end{equation*}

Let us remark that the restriction of the $\f$-canonical
connection $\ddot{D}$  on the contact distribution $H$ is
the unique canonical connection $\ddot{\n}'$ with torsion given in \eqref{4.3}
on the corresponding almost
complex Norden manifold studied in  \cite{GaMi87}.

In \cite{ManIv38}, it is constructed a linear connection $\ddot{D}$ as
follows:
\begin{equation*}
\begin{split}
&g(\ddot{D}_xy,z)=g(\n_xy,z)+\ddot{Q}(x,y,z),
\quad\\
&\ddot{Q}(x,y,z)
=\dot{Q}(x,y,z)-\frac{1}{8}\left\{N(\f^2 z,\f^2 y,\f^2 x)+2N(\f z,\f
y,\xi)\eta(x)\right\}.
\end{split}
\end{equation*}
It is a natural connection on $\M$ and its torsion is
\begin{equation*}
\ddot{T}(x,y,z)=\dot{T}(x,y,z)
+\frac{1}{8}\left\{
N(hz,hy,hx)+2N(hz,hy,vx)
\right\}_{[x\leftrightarrow y]},
\end{equation*}
which is equivalent to
\begin{equation}\label{T-can-N}
\begin{split}
\ddot{T}(x,y,z)=\dot{T}(x,y,z)
&+\frac{1}{8}\bigl\{N(hx,hy,hz)-\sx N(hx,hy,hz)\bigr\}\\
&+\frac{1}{4}\bigl\{N(hx,hy,vz)-\sx N(hx,hy,vz)\bigr\}.
\end{split}
\end{equation}
Obviously, $\ddot{D}$ is a $\f$-canonical connection on $\M$ and
it is unique.
Moreover, the torsion forms of the $\f$-canonical connection coincide with those of the  $\f$B-connection.

In  \cite{ManIv38}, it is proved that the $\f$-canonical connection
and the $\f$B-connection coincide on an almost con\-tact B-metric
manifold if and only if $N(hx,hy)$ vanishes, or equivalently, $\M$ belongs to the class
$\U_0=
\F_1\oplus\F_2\oplus\F_4\oplus\F_5\oplus\F_6\oplus\F_8\oplus
\F_9\oplus\F_{10}\oplus\F_{11}$.
In other words, bearing in mind \eqref{T0N},
the torsions of the $\f$-canonical connection and the $\f$B-connection on a
manifold from $\U_0$ have the form
\begin{equation*}\label{T0N=}
\begin{split}
\ddot{T}(x,y,z) =\dot{T}(x,y,z) &=\frac{1}{8}\bigl\{\N(hz,hy,hx)-\N(hz,hx,hy)\bigr\}\\
&+\frac{1}{4}\bigl\{2N(vx,hy,hz)+N(vz,hx,hy)\\
&\phantom{=}
+2\N(vx,hy,hz)-\N(vz,hx,hy)-2\N(vz,vx,hy)\bigr\}_{[x\leftrightarrow y]}.
\end{split}
\end{equation*}

The torsions $\dot{T}$ and $\ddot{T}$ are different each other on a
manifold belonging to the only basic classes $\F_3$ and $\F_7$
as well as to their direct sums with other classes.
For $\F_3\oplus\F_7$, using \eqref{T0N-F37} and \eqref{T-can-N},
we obtain the form of the torsion of the $\f$-canonical connection as follows
\begin{equation*}
\ddot{T}(x,y,z)=\frac{1}{4} N(hx,hy,hz)+\frac{1}{2} N(hx,hy,vz).
\end{equation*}
Therefore,
using \eqref{Fi:N}, the torsion of the $\f$-canonical connection for $\F_3$ and $\F_7$ is expressed by
\begin{equation}\label{Tcan-F3F7}
\F_3:\quad \ddot{T}=\frac{1}{4} hN,\qquad \F_7:\quad \ddot{T}=\D\eta\otimes\eta.
\end{equation}

The general contactly conformal transformations of an almost
contact B-metric structure are defined by
\begin{equation}\label{Transf}
\begin{split}
    &\bar{\xi}=e^{-w}\xi,\quad \bar{\eta}=e^{w}\eta,\quad\\
    &\bar{g}(x,y)=e^{2u}\cos{2v}\ g(x,y)+e^{2u}\sin{2v}\ g(x,\f y)
    +(e^{2w}-e^{2u}\cos{2v})\eta(x)\eta(y),
\end{split}
\end{equation}
where $u$, $v$, $w$ are differentiable functions  on
$M$~ \cite{Man4}. These transformations form a group denoted by
$G$.
If $w=0$, we obtain the contactly conformal transformations of the
B-metric, introduced in  \cite{ManGri1}. By $v=w=0$, the
transformations \eqref{Transf} are reduced to the usual conformal
transformations of $g$.

Let us remark that $G$ can be considered as a contact complex
conformal gauge group, \ie the composition of an almost contact
group preserving $H$ and a complex conformal transformation of the
complex Riemannian metric $\overline{g^{\C}}=e^{2(u+iv)}g^{\C}$ on
$H$.

Note that the normality condition $N=0$ is not preserved by $G$.
In  \cite{ManIv38}, it is established that the tensor $N(\f\cdot,\f\cdot)$ is
an invariant of $G$ on any almost contact B-metric manifold and
 $\U_0$ is closed with respect to $G$.
By direct computations is established there  that each of  $\F_i$ $(i=1,2,\dots,11)$
 is closed by the action of the subgroup $G_0$ of $G$ defined by the conditions
$
    \D u\circ\f^2+\D v\circ\f=\D u\circ\f -\D v\circ\f^2 y=\D u(\xi)=\D
    v(\xi)=\D w\circ\f=0.
$ 
Moreover, $G_0$ is
the largest subgroup of $G$ preserving $\ta$, $\ta^*$,
$\om$ and  $\F_0$.
Moreover, the torsion of the
$\f$-canonical connection is invariant with respect to the general
contactly conformal transformations if and only if these
transformations belong to $G_0$ \cite{ManIv38}.

Bearing in mind the invariance of $\F_i$ $(i=1,2,\dots,11)$ and
$\ddot{T}$ with respect to the transformations of $G_0$,  each of the basic
classes of $\M$ is characterized by the torsion of
the $\f$-canonical connection
 as follows: \cite{ManIv38}
\[
\begin{array}{rl}
\F_1: &\ddot{T}(x,y)=\frac{1}{2n}\left\{\ddot{t}(\f^2 x)\f^2 y
-\ddot{t}(\f^2 y)\f^2 x
            +\ddot{t}(\f x)\f y-\ddot{t}(\f y)\f x\right\}; \\[0pt]
\F_2: &\ddot{T}(\xi,y)=0,\quad \eta\left(\ddot{T}(x,y)\right)=0,\quad
\ddot{T}(x,y)=\ddot{T}(\f x,\f y),\quad \ddot{t}=0;\\[0pt]
\F_3: &\ddot{T}(\xi,y)=0,\quad \eta\left(\ddot{T}(x,y)\right)=0,\quad
\ddot{T}(x,y)=\f \ddot{T}(x,\f y);\\[0pt]
\F_4: &\ddot{T}(x,y)=\frac{1}{2n}t'^*(\xi)\left\{\eta(y)\f x-\eta(x)\f y\right\};\\[0pt]
\F_5: &\ddot{T}(x,y)=\frac{1}{2n}t'(\xi)\left\{\eta(y)\f^2 x-\eta(x)\f^2 y\right\};\\[0pt]
\F_6: &\ddot{T}(x,y)=\eta(x)\ddot{T}(\xi,y)-\eta(y)\ddot{T}(\xi,x),\quad\\
&\ddot{T}(\xi,y,z)=\ddot{T}(\xi,z,y)=-\ddot{T}(\xi,\f y,\f z);\\[0pt]
\F_{7/8}: &\ddot{T}(x,y)=\eta(x)\ddot{T}(\xi,y)-\eta(y)\ddot{T}(\xi,x)
+\eta(\ddot{T}(x,y))\xi,\\[0pt]
            &\ddot{T}(\xi,y,z)=-\ddot{T}(\xi,z,y)=\mp \ddot{T}(\xi,\f y,\f z)\\
            &=\frac{1}{2}\ddot{T}(y,z,\xi)=\mp \frac{1}{2}\ddot{T}(\f y,\f z,\xi);\\[0pt]
\F_{9/10}: &\ddot{T}(x,y)=\eta(x)\ddot{T}(\xi,y)-\eta(y)\ddot{T}(\xi,x),
\quad\\ &\ddot{T}(\xi,y,z)=\pm \ddot{T}(\xi,z,y)=\ddot{T}(\xi,\f y,\f z);\\[0pt]
\F_{11}: &\ddot{T}(x,y)=\left\{\hat{\ddot{t}}(x)\eta(y)-\hat{\ddot{t}}(y)\eta(x)\right\}\xi.
\end{array}
\]

According to the classification of the torsions in  \cite{ManIv36}
and the characterization above, we have that the correspondence
between the classes $\F_i$ of $M$ and the classes $\T_{j}$ of the
torsion $\ddot{T}$ of the $\f$-canonical connection on $M=\M$ is
given as follows: \cite{ManIv38}
\[
\begin{array}{ll}
M\in\F_0\; \Leftrightarrow \;
\ddot{T}\in\T_{1}\oplus\T_{2}\oplus\T_{6}\oplus\T_{12}; \quad &
M\in\F_6\; \Leftrightarrow \; \ddot{T}\in\T_{11};
\\[0pt]
M\in\F_1\; \Leftrightarrow \; \ddot{T}\in\T_{4}; \quad &
M\in\F_7\; \Leftrightarrow \; \ddot{T}\in\T_{7}\oplus\T_{12};
\\[0pt]
M\in\F_2\; \Leftrightarrow \; \ddot{T}\in\T_{5}; \quad &
M\in\F_8\; \Leftrightarrow \; \ddot{T}\in\T_{8}\oplus\T_{14};
\\[0pt]
M\in\F_3\; \Leftrightarrow \; \ddot{T}\in\T_{3}; \quad &
M\in\F_9\; \Leftrightarrow \; \ddot{T}\in\T_{13};
\\[0pt]
M\in\F_4\; \Leftrightarrow \; \ddot{T}\in\T_{10}; \quad &
M\in\F_{10}\; \Leftrightarrow \; \ddot{T}\in\T_{14};
\\[0pt]
M\in\F_5\; \Leftrightarrow \; \ddot{T}\in\T_{9}; \quad &
M\in\F_{11}\; \Leftrightarrow \; \ddot{T}\in\T_{15}.
\end{array}
\]

\subsection{The $\f$KT-connection }

In  \cite{Man31}, it is introduced a natural connection $\dddot{D}$ on $\M$,
called a \emph{$\f$KT-connec\-tion}, which torsion $\dddot{T}$ is
totally skew-symmetric, \ie a 3-form. There it is proved that the
$\f$KT-connection exists on an almost contact B-metric manifold
 if and only if $\widehat{N}$ vanishes on it, \ie
when $\M\in \F_3\oplus\F_7$. The $\f$KT-connection is the
odd-dimensional analogue of the KT-connection introduced in
 \cite{Mek-2} on the corresponding class of quasi-K\"ahler manifolds with Norden metric.  The unique $\f$KT-connection
$\dddot{D}$ is determined by
    \[
            g(\dddot{D}_xy,z)=g(\n_xy,z)+\frac{1}{2}\dddot{T}(x,y,z),
    \]
    where the torsion is defined by
\begin{equation}\label{T37} %
\begin{split}
\dddot{T}(x,y,z)&=-\frac{1}{2} \sx\bigl\{F(x,y,\f z)-3\eta(x)F(y,\f
z,\xi)\bigr\}\\
&=\frac{1}{4}\sx
N(x,y,z)+\frac{1}{2}\left(\eta\wedge
\D\eta\right)(x,y,z). %
\end{split}
\end{equation} %
Obviously, the torsion forms of the $\f$KT-connection are zero.

The torsion $\dddot{T}$ of the $\f$KT-connection belongs to \(
\T_{3}\oplus\T_{6}\oplus\T_{7}\oplus\T_{12}\), according to \cite{ManIv36}.

From \eqref{T37} and \eqref{Fi:N}, for the classes  $\F_3$ and $\F_7$ we obtain
\begin{equation}\label{TKT-F3F7} %
\F_3:\quad \dddot{T}=\frac{1}{4} \sx
hN,\qquad
\F_7:\quad \dddot{T}=\eta\wedge
\D\eta. %
\end{equation} %

As mentioned above, the $\f$B-connection and the
$\f$-canonical con\-nec\-tion  coincide (\ie $\dot{D}\equiv\ddot{D}$) if and
only if $\M$ belongs to $\F_i$,
$i\in\{1,2,\dots,11\}\setminus \{3,7\}$ (where the
$\f$KT-connection $\dddot{D}$ does not exist).

For the rest basic classes $\F_3$ and $\F_7$ (where the $\f$KT-connection exists), according to \cite{ManIv36}, it is valid that
the $\f$B-con\-nec\-tion  is the \emph{average} connection of
the $\f$-canonical
con\-nec\-tion  and the $\f$KT-connection, \ie $\dot{D}=\frac12\left\{\ddot{D}+\dddot{D}\right\}$.
This relation  holds also because of \eqref{T0N-F3F7}, \eqref{Tcan-F3F7} and \eqref{TKT-F3F7}.


\end{document}